\theoremstyle{plain}
\newtheorem{theorem}{Theorem}
\newtheorem{corollary}[theorem]{Corollary}
\newtheorem{proposition}[theorem]{Proposition}
\newtheorem{lemma}[theorem]{Lemma}
\theoremstyle{definition}
\newtheorem{definition}[theorem]{Definition}
\DeclareMathOperator{\re}{Re\,}
 \DeclareMathOperator{\e}{e}
 \DeclareMathOperator{\Id}{\mathrm{Id}}
\newcommand{\R}{\mathbb{R}}
\newcommand{\inner}[1]{\ensuremath{\left\langle #1\right\rangle}}
\newcommand{\eps}{\varepsilon}
\renewcommand{\leq}{\leqslant}
\renewcommand{\geq}{\geqslant}
\renewcommand{\le}{\leqslant}
\renewcommand{\ge}{\geqslant}
\renewcommand{\tilde}{\widetilde}
\begin{document}

\title{Bishop-Phelps-Bollob\'{a}s property for bilinear forms on spaces of continuous functions}

\dedicatory{Dedicated to the memory of Manuel Valdivia}

\author[Kim]{Sun Kwang Kim}
\address[Kim]{Department of Mathematics, Kyonggi University , Suwon 443-760, Republic of Korea}
\email{\texttt{sunkwang@kgu.ac.kr}}

\author[Lee]{Han Ju Lee}

\address[Lee]{Department of Mathematics Education,
Dongguk University - Seoul, 100-715 Seoul, Republic of Korea}
\email{\texttt{hanjulee@dongguk.edu}}

\author[Mart\'{\i}n]{Miguel Mart\'{\i}n}
\address[Mart\'{\i}n]{Departamento de An\'{a}lisis Matem\'{a}tico,
Facultad de Ciencias,
Universidad de Granada,
E-18071 Granada, Spain}
\email{\texttt{mmartins@ugr.es}}

\subjclass[2010]{Primary 46B20; Secondary 46B04, 46B22}
\keywords{Banach space, approximation, norm attaining operator, bilinear form, Bishop-Phelps-Bollob\'{a}s theorem.}

\thanks{The second named author partially was supported by Basic Science Research Program through the National Research Foundation
of Korea(NRF) funded by the Ministry of Education, Science and Technology (2012R1A1A1006869). Third author partially supported by Spanish MICINN and FEDER project no.~MTM2012-31755 and by Junta de Andaluc\'{\i}a and FEDER grants FQM-185 and P09-FQM-4911.}

\date{September 30th, 2014}

\begin{abstract}
It is shown that the Bishop-Phelps-Bollob\'as theorem holds for bilinear forms on the complex $C_0(L_1)\times C_0(L_2)$ for arbitrary locally compact topological Hausdorff spaces $L_1$ and $L_2$.
\end{abstract}

\maketitle

All along the paper, we will use the following usual notation. Let $B_X$ and $S_X$ denote, respectively, the closed unit ball and the unit sphere of a Banach space $X$, and let $X^*$ denotes the (topological) dual space of $X$. We write $\mathcal{L}(X,Y)$ to denote the space of all bounded linear operators from a Banach space $X$ into a Banach space $Y$ and $\mathcal{B}(X \times Y)$ to denote the space of all bounded bilinear forms defined on $X\times Y$. We say that $T\in \mathcal{L}(X,Y)$ (respectively $B\in \mathcal{B}(X\times Y)$) \emph{attains its norm} if there is $x\in S_X$ such that $\|Tx\|=\|T\|$ (respectively, there are $x\in S_X$ and $y\in S_Y$ such that $|B(x,y)|=\|B\|$).

The classical Bishop-Phelps theorem \cite{BP} states that the set of norm attaining linear functionals are dense in the topological dual of an arbitrary Banach space. Bollob\'as \cite{Bol} gave a quantitative version of this theorem which is now called the Bishop-Phelps-Bollob\'as theorem and states the following (see \cite[Corollary~2.4]{C-K-M-M-R} for this version):

\begin{quote}
Let $X$ be a Banach space. If $x\in B_X$ and $x^*\in S_{B_X^*}$ satisfy $\re x^*(x)> 1- \eps^2/2$. Then there exist $y\in
S_X$ and $y^*\in S_{X^*}$ such that
$y^*(y)=1$, $\|x^*-y^*\|<\eps$ and $\|x-y\|<\eps$.
\end{quote}

In 1963, Lindenstrauss \cite{Lind} studied possible extensions of the Bishop-Phelps theorem to the vector-valued case, starting a fruitful line of research on norm attaining operators. We refer the reader to the expository paper \cite{Acosta-RACSAM} for more information and background. Let us just comment a couple of  interesting results in this line. First, if a Banach space $X$ has the Radon-Nikod\'{y}m property, then norm attaining operators from $X$ into $Y$ are dense in $\mathcal{L}(X,Y)$ for every Banach space $Y$ \cite{Bou}, and this actually characterizes the Radon-Nikod\'{y}m property if it holds for every equivalent renorming of $X$ \cite{Bou}. Second, norm attaining operators are not dense in $\mathcal{L}(L_1[0,1],C[0,1])$ but norm attaining operators are dense in $\mathcal{L}(C(K),L_p(\mu))$ for every compact Hausdorff topological space $K$, every measure $\mu$ and $1\leq p<\infty$ \cite{Sch2}.

In 2008, Acosta, Aron, Garc\'{\i}a and Maestre \cite{AAGM2} started the study of possible vector-valued versions of the Bishop-Phelps-Bollob\'{a}s theorem, introducing the so-called Bishop-Phelps-Bollob\'{a}s property. A pair $(X, Y)$ of Banach spaces has the \emph{Bishop-Phelps-Bollob\'{a}s property for operators} if for every $0<\eps<1$, there is $\eta(\eps)>0$ such that given $T\in \mathcal{L}(X, Y)$ with $\|T\|=1$ and $x_0\in S_X$ satisfying $\|T(x_0)\|>1-\eta(\eps)$, there exist $x_1\in S_X$  and $S\in \mathcal{L}(X, Y)$ such that $\|S(x_1)\|=\|S\|=1$, $\|x_0 - x_1 \|<\eps$ and $\|S- B\|<\eps$. In this case, it also said that the Bishop-Phelps-Bollob\'{a}s theorem holds for $\mathcal{L}(X,Y)$.

It is clear that the Bishop-Phelps-Bollob\'{a}s property for a pair $(X,Y)$ implies that norm attaining operators are dense in $\mathcal{L}(X,Y)$, being false the converse: there is a (reflexive) space $Y$ such that the Bishop-Phelps-Bollob\'{a}s theorem does not holds for $\mathcal{L}(\ell_1^{(2)},Y)$ \cite{AAGM2,ACKLM} ($\ell_1^{(2)}$ is the two-dimensional $L_1$ space) while all elements of $\mathcal{L}(\ell_1^{(2)},Y)$ attain their norms. We refer the reader to \cite{A,AAGM2,ABCCKLLM, ACKLM, ACK, ACGM, CasGuiKad, CK, CKLM, Kim, KL, KLL} and references therein for more information and background. Let us just mention some examples of pairs of classical spaces having the Bishop-Phelps-Bollob\'{a}s property for operators, namely, $(L_1(\mu),L_\infty(\nu))$ for arbitrary measure $\mu$ and localizable measure $\nu$ \cite{ACGM,CKLM}, $(L_1(\mu),L_1(\nu))$ for arbitrary measures $\mu$ and $\nu$ \cite{CKLM}, $(C(K_1),C(K_2))$ for every compact spaces $K_1$ and $K_2$ \cite{ABCCKLLM}, and $(L_p(\mu),Y)$ for arbitrary measure $\mu$, arbitrary Banach space $Y$ and $1<p<\infty$ \cite{ABGM,KL}.

On the other hand, a lot of attention was focus in the 1990's on the problem of extending the Bishop-Phelps theorem to the bilinear case, starting with the paper of Aron, Finet and Werner \cite{AronFinetWerner} where the problem is stated and it is shown, in particular, that the Radon-Nikod\'{y}m property is sufficient to get such a extension. In 1998, Acosta, Aguirre and Pay\'{a} \cite{AAP} found the first negative example. In 1997, Alaminos, Choi, Kim and Pay\'{a} \cite{ACKP} showed that norm attaining bilinear forms on $C_0(L)$ spaces are dense in the space of all bounded bilinear forms. We refer again to the expository paper \cite{Acosta-RACSAM} for a detailed account on the subject.

Let us recall that for all Banach spaces $X$ and $Y$, $\mathcal{B}(X \times Y)$ is isometrically isomorphic to $\mathcal{L}(X,Y^*)$ (by the canonical isometry $B\longmapsto T$ given by $[T(x)](y)=B(x,y)$ for every $y\in Y$ and $x\in X$). Moreover, density of norm attaining bilinear forms on $X\times Y$ implies density of norm attaining operators from $X$ into $Y^*$, as every norm attaining bilinear form produces a norm attaining operator. But the reverse result is far from being true. For instance, norm attaining operators from $L_1[0,1]$ into $L_\infty[0,1]$ are dense in $\mathcal{L}(L_1[0,1], L_\infty[0,1])$ \cite{FinetPaya}, while norm attaining bilinear forms are not dense in $\mathcal{B}(L_1[0,1]\times L_1[0,1])$ \cite{C}.

Very recently, Acosta, Becerra, Garc\'{\i}a and Maestre \cite{ABGM} introduced the Bishop-Phelps-Bollob\'{a}s property for bilinear forms, a concept which appeared without name in \cite{CS}.

\begin{definition}[\mbox{\cite{ABGM,CS}}]\label{definition}
A pair $(X, Y)$ of Banach spaces have the \emph{Bishop-Phelps-Bollob\'as property for bilinear forms} if for every $0<\eps<1$, there is $\eta(\eps)>0$ such that given $B\in \mathcal{B}(X\times Y)$ with $\|B\|=1$ and $(x_0,y_0) \in S_X\times S_Y$ satisfying $\|B(x_0,y_0)\|>1-\eta(\eps)$, there exist $(x_1,y_1)\in S_X\times S_Y$  and $S\in \mathcal{B}(X\times Y)$ with $\|S\|=1$ satisfying the following conditions:
\[
|S(x_1,y_1)|=1, \qquad \|x_0 - x_1 \|<\eps, \quad \|y_0 -y_1 \|<\eps, \quad \text{and} \quad \|S- B\|<\eps.
\]
In this case, we also say that the Bishop-Phelps-Bollob\'{a}s theorem holds for $\mathcal{B}(X\times Y)$.
\end{definition}

There are some recent results about the Bishop-Phelps-Bollob\'{a}s property for bilinear forms \cite{ABGM, ABCGKLM, CS, GLM, KL}. For instance, let us comment that the Bishop-Phelps-Bollob\'{a}s theorem holds for $\mathcal{B}(X\times Y)$ if the Banach spaces $X$ and $Y$ are uniformly convex \cite{ABGM}.

If a pair $(X,Y)$ of Banach spaces has the Bishop-Phelps-Bollob\'{a}s for bilinear forms, then norm attaining bilinear forms on $X\times Y$ are dense and, on the other hand, the pair $(X,Y^*)$ has the Bishop-Phelps-Bollob\'{a}s property for operators. None of the reversed results are true: the pair $(\ell_1,\ell_1)$ fails the Bishop-Phelps-Bollob\'{a}s for bilinear forms \cite{CS}, while norm attaining bilinear forms are dense in $\mathcal{B}(\ell_1\times \ell_1)$ \cite{AronFinetWerner} and also the pair $(\ell_1,\ell_\infty)$ has the Bishop-Phelps-Bollob\'{a}s property for operators \cite{AAGM2}.

Recently, it has been shown that the Bishop-Phelps-Bollob\'{a}s theorem holds for $\mathcal{L}(C_0(L), L_1(\mu))$ in the complex case for arbitrary locally compact Hausdorff topological space $L$ and arbitrary positive measure $\mu$ \cite{A, KLL}. As $C_0(L)^*$ is isometrically isomorphic to a $L_1(\mu)$ space, it is natural to ask whether the Bishop-Phelps-Bollob\'{a}s theorem holds for  $\mathcal{B}(C_0(L_1)\times C_0(L_2))$. In this paper, we show that the answer to this question is affirmative (again in the complex case). Concretely, we will prove the following result.

\begin{theorem}\label{main:theorem}
Let $L_1$ and $L_2$ locally compact Hausdorff topological spaces. Then the pair $(C_0(L_1), C_0(L_2))$ has the Bishop-Phelps-Bollob\'{a}s property for bilinear forms in the complex case. Moreover, for every $0<\eps<1$ there exists $\eta(\eps)>0$, independent of $L_1$ and $L_2$, which is valid for $\mathcal{B}(C_0(L_1)\times C_0(L_2))$ in Definition~\ref{definition}.
\end{theorem}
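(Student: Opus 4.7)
My plan is to bootstrap the known Bishop-Phelps-Bollob\'{a}s theorem for $\mathcal{L}(C_0(L),L_1(\mu))$ in the complex case, recalled in the introduction, into the bilinear setting via the canonical identification $\mathcal{B}(C_0(L_1)\times C_0(L_2))=\mathcal{L}(C_0(L_1),M(L_2))$. Given $B$ with $\|B\|=1$ and $(f_0,g_0)$ with $|B(f_0,g_0)|>1-\eta$, set $T(f):=B(f,\cdot)\in M(L_2)$; then $\|T(f_0)\|\geq|B(f_0,g_0)|>1-\eta$. Since $M(L_2)$ is isometric to an $L_1$-space, the operator BPB property applied to $T$ produces $f_1\in S_{C_0(L_1)}$ and $\tilde T$ of norm one with $\|\tilde T(f_1)\|=1$, with $\|f_1-f_0\|$ and $\|\tilde T-T\|$ small and depending only on $\eta$ (uniformly in $L_1,L_2$).

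The nontrivial step is to upgrade this operator-level approximation to a bilinear-level one, that is, to produce $g_1\in S_{C_0(L_2)}$ close to $g_0$ and a bilinear form $S$ close to $B$ satisfying $|S(f_1,g_1)|=\|S\|=1$. Writing $\nu:=\tilde T(f_1)\in M(L_2)$, the obstruction is that a generic complex Radon measure does not admit a continuous unimodular function on its support realizing $\|\nu\|$ as an integral, so no such $g_1$ need exist in general. The remedy is a further perturbation replacing $\nu$ by a discrete measure $\nu_1=\sum_{j=1}^{n}c_j\delta_{t_j}$. The inequality $|B(f_0,g_0)|>1-\eta$ forces $g_0$ to agree, outside a set of small $|T(f_0)|$-measure, with the conjugate of the Radon-Nikod\'ym phase of $T(f_0)$; since $\tilde T$ is close to $T$ and $f_1$ close to $f_0$, this structural information transfers, up to a small controlled loss, to $\nu$. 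One then partitions most of $\supp(|\nu|)$ into finitely many Borel pieces $E_j$ on which both the phase of $\nu$ and $g_0$ are nearly constant and mutually conjugate, selects $t_j\in E_j$ with $|g_0(t_j)|$ close to $1$, and collapses $\nu|_{E_j}$ into a point mass at $t_j$. A standard Urysohn construction in the locally compact Hausdorff space $L_2$ then delivers $g_1\in S_{C_0(L_2)}$ taking the prescribed values $\overline{c_j}/|c_j|$ at the $t_j$ and satisfying $\|g_1-g_0\|<\eps$.

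The main obstacle is to promote the discretization of the single vector $\tilde T(f_1)$ to a perturbation of the whole operator (equivalently, of $B$) without losing control of $\|S-B\|$. A clean way is to add a rank-one correction, writing
\[
S(f,g):=\tilde T(f)(g)+\varphi(f)\bigl[\nu_1-\nu\bigr](g)
\]
for a suitably chosen $\varphi\in C_0(L_1)^{*}$ with $\varphi(f_1)=1$ and $\|\varphi\|$ bounded by a universal constant (for instance, a norming measure of $f_1$). Then $\|S-\tilde T\|\leq\|\varphi\|\cdot\|\nu_1-\nu\|$ is small, and by construction $S(f_1,g) = \int g\,d\nu_1$, so $|S(f_1,g_1)|$ can be made equal to $\|S\|=1$ after a last normalization. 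Because each of the three constants involved --- the operator BPB modulus into $L_1$, the discretization error, and $\|\varphi\|$ --- depends only on $\eps$, the resulting $\eta(\eps)$ is independent of $L_1$ and $L_2$, as required.
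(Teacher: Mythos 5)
Your first step---feeding the canonical operator $T(f)=B(f,\cdot)\in C_0(L_2)^*\equiv L_1(\mu)$ into the operator Bishop--Phelps--Bollob\'as theorem to obtain $f_1$ and $\tilde T$ with $\|\tilde T(f_1)\|=\|\tilde T\|=1$---is sound and close in spirit to the paper, which uses the same underlying machinery (uniform complex convexity of $L_1(\mu)$ and Lemma~\ref{lemma1}) rather than the operator result as a black box. The argument collapses, however, at the discretization step. If $\nu=\tilde T(f_1)$ has a nonatomic part, the purely atomic measure $\nu_1=\sum_j c_j\delta_{t_j}$ is mutually singular with it, so the total variation $\|\nu_1-\nu\|$ is at least the total variation of that nonatomic part (and equals $\|\nu_1\|+\|\nu\|\approx 2$ when $\nu$ is nonatomic), no matter how fine the partition $\{E_j\}$ is. Since $\|S-\tilde T\|=\|\varphi\|\,\|\nu_1-\nu\|$ is computed precisely with the total variation norm of $M(L_2)=C_0(L_2)^*$, your rank-one correction is not a small perturbation: the ``discretization error'' is small only in the weak-$*$ sense, which is irrelevant here. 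A second, independent gap is norm attainment: by construction $S(f_1,\cdot)=\nu_1$, so $|S(f_1,g_1)|=\|\nu_1\|$, but for $f\neq f_1$ the term $\varphi(f)(\nu_1-\nu)$ can push $\|S(f,\cdot)\|$ strictly above $\|\nu_1\|$, so nothing guarantees $\|S\|=|S(f_1,g_1)|$; dividing by $\|S\|$ rescales both quantities and does not restore equality.

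The paper circumvents both problems by never replacing the measure itself. By regularity it finds a compact set $F_1$ carrying all but $\gamma$ of $|\mu_{T_1^*(g_1)}|$, so that $\|T_1^{**}-T_1^{**}P_{F_1}\|\le\gamma$, and corrects only this small remainder via $S_1(f)=T_1^{**}P_{F_1}(f)+\overline{f_2(t_1)}f(t_1)(T_1^{**}-T_1^{**}P_{F_1})(f_2)$ with $t_1\in F_1$ and $|f_2|=1$ on $F_1$. Because $P_{F_1}f$ and $(\Id-P_{F_1})f_2$ are disjointly supported in $C_0(L_1)^{**}\equiv L_\infty(\nu)$, this is $T_1^{**}$ composed with a substitution of norm at most one, which forces $\|S_1\|\le 1$ while $S_1(f_2)=T_1^{**}(f_2)$ gives $[S_1(f_2)](g_1)=1$: exact norm attainment at the cost of a perturbation of size $2\gamma$. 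Moreover, to keep \emph{both} coordinates close to $(f_0,g_0)$ the paper first applies a Lindenstrauss-type perturbation $\tilde B_1=\alpha\tilde B+\lambda\alpha^2\tilde B(\cdot,g_0)\tilde B(f_0,\cdot)$ combined with the density of norm-attaining bilinear forms on $C(K_1)\times C(K_2)$ (Proposition~\ref{prop:density}); your proposal has no mechanism playing this role for the second variable, since the operator BPB theorem only controls $f_1-f_0$. The second half of your argument would need to be rebuilt along these lines.
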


Even in the particular case of $c_0$, the above result is new. It  solves in the positive \cite[Open problem 4.6.(1)]{ABCGKLM} and \cite[Problem (b) in p.\ 385]{KL} for the complex case.

\begin{corollary}
The Bishop-Phelps-Bollob\'{a}s theorem holds for $\mathcal{B}(c_0 \times c_0)$ in the complex case.
\end{corollary}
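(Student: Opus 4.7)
I would start from the canonical isometry $\mathcal{B}(C_0(L_1)\times C_0(L_2))\cong \mathcal{L}(C_0(L_1), M(L_2))$, where $M(L_2)=C_0(L_2)^*$ is a (complex) $L_1$-space. The plan is to combine in succession (i) the Bishop--Phelps--Bollob\'{a}s property for $\mathcal{L}(C_0(L), L_1(\mu))$ in the complex case (cited just above as \cite{A,KLL}) with (ii) the classical Bishop--Phelps--Bollob\'{a}s theorem applied to a linear functional on $C_0(L_2)$, and then to splice the two perturbations into a single bilinear form without inflating the norm.

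In detail: given $B$ of norm one and $(f_0,g_0)\in S_{C_0(L_1)}\times S_{C_0(L_2)}$ with $|B(f_0,g_0)|>1-\eta$, let $T\in\mathcal{L}(C_0(L_1),M(L_2))$ be the associated operator. First, I would apply the BPBp for $\mathcal{L}(C_0(L_1),L_1)$ to produce a nearby norm-one operator $T'$ attaining its norm at some $f_1\in S_{C_0(L_1)}$ close to $f_0$, and set $\mu:=T'f_1\in S_{M(L_2)}$. Triangle-inequality estimates ensure that $|\mu(g_0)|$ is still close to $1$. Second, I would apply the classical Bishop--Phelps--Bollob\'{a}s theorem to the functional $\mu\in M(L_2)$ on the Banach space $C_0(L_2)$, obtaining $\hat\mu\in S_{M(L_2)}$ close to $\mu$ together with $g_1\in S_{C_0(L_2)}$ close to $g_0$ such that $|\hat\mu(g_1)|=1$. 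Third, I would replace $T'$ by an operator $S$ of norm one with $Sf_1=\hat\mu$ and $S$ close to $T'$; the bilinear form induced by $S$ would then be close to $B$ and satisfy $|S(f_1,g_1)|=1=\|S\|$.

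The main obstacle is the final gluing step. The naive rank-one perturbation $S=T'+\phi\otimes(\hat\mu-\mu)$, where $\phi\in S_{M(L_1)}$ norms $f_1$, already achieves $Sf_1=\hat\mu$ and $\|S-T'\|\leq\|\hat\mu-\mu\|$, but only yields the estimate $\|S\|\leq 1+\|\hat\mu-\mu\|$. I see two ways to overcome this. The first is a convergence argument: normalize to $S/\|S\|$, observe that it almost attains its norm at $f_1$, re-apply the BPBp for operators followed by a fresh classical BPB on the resulting image-measure, and iterate; if the BPBp moduli are tracked quantitatively, the successive perturbations can be made to shrink geometrically and to converge to a genuine norm-attaining bilinear form. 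The second, conceptually cleaner, route is to open up the actual construction in \cite{A,KLL} and show that the operator $T'$ produced there can be chosen so that $T'f_1$ is a discrete (i.e.\ finitely supported) measure on $L_2$; any such measure automatically attains its norm on $C_0(L_2)$ by a straightforward Urysohn argument, thereby collapsing Steps 2 and 3 entirely. Either route gives a function $\eta(\eps)$ depending only on the BPBp moduli of the two input theorems, both of which are independent of the underlying locally compact spaces, which would explain the uniformity claim in the statement.
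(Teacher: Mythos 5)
The corollary is, in the paper, an immediate consequence of Theorem~\ref{main:theorem}: $c_0=C_0(\N)$ for $\N$ with the discrete topology, which is locally compact and Hausdorff, so no further argument is needed. You instead attempt an independent proof, and it has a genuine gap at exactly the point you flag, the gluing step. The obstruction is structural and is stated in the paper itself: the Bishop--Phelps--Bollob\'as property for operators for the pair $(X,Y^*)$ does \emph{not} imply the bilinear property for $(X,Y)$ --- the pair $(\ell_1,\ell_\infty)$ has the operator property while $(\ell_1,\ell_1)$ fails the bilinear one. Hence no argument using (i) and (ii) only as black boxes can succeed; specific structure of $C_0(L)$ must enter.

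Concerning your two proposed fixes. The iteration diverges: if at some stage the bilinear defect is $\delta$, the classical Bishop--Phelps--Bollob\'as theorem only yields $\|\hat\mu-\mu\|<\eps$ with $\delta\sim\eps^2/2$, i.e.\ a perturbation of order $\sqrt{\delta}$; after normalizing $S/\|S\|$ the new defect is of order $\sqrt{\delta}$, and the operator modulus (itself at best of power type) must be inverted again, so the successive perturbation sizes behave like $\delta^{1/2},\delta^{1/4},\dots$ --- they grow rather than shrink geometrically. The second route is closer to what must actually be done, but the claim that a discrete measure attains its norm on $C_0(L_2)$ by Urysohn is false unless the measure is \emph{finitely} supported (an element of $\ell_1$ with infinite support never attains its norm on $c_0$), and proving that the construction of \cite{A,KLL} can be arranged to output such a $T'$ while simultaneously keeping $f_1$ close to $f_0$ \emph{and} keeping $T'f_1$ aligned with the direction of $g_0$ is precisely the hard content of this paper (Lemma~\ref{lemlem} together with the two successive perturbations, via polar decompositions, regularity of Radon measures and Urysohn functions, in the proof of Theorem~\ref{main:theorem}); it is not a corollary of the cited results. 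In short: for the corollary as stated, just invoke the theorem; as a self-contained proof, the proposal does not close.
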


The rest of this paper is devoted to provide the proof of Theorem~\ref{main:theorem}, for which we will need a number of preliminary results. From now on, we will only consider complex Banach spaces.

First, we fully use the power of the uniform complex convexity of $L_1(\mu)$ spaces. The \emph{modulus of complex convexity} $H_X$ of a complex Banach space $X$ is defined by
\[
H_X(\eps) = \inf\left\{ \sup_{0\le \theta \le 2\pi} \left\|x + \e^{i\theta} y \right\| -1\, :\ x\in S_X,\, \|y\|\ge \eps\right\} \qquad \bigl(\eps\in \R^+\bigr).
\]
A (complex) Banach space is said to be \emph{uniformly complex convex} if $H_X(\eps)>0$ for all $\eps>0$. This concept has been recently used to study denseness of norm attaining operators \cite{A,CKL}. It is easy to check that every uniformly convex (complex) Banach space is uniformly complex convex. The converse result is false, as it is shown in \cite{Glo} that $L_1(\mu)$ is uniformly complex convex. Moreover, there is common lower bound for the modulus of complex convexity of all $L_1(\mu)$ spaces \cite[Theorem 1]{Glo} (we also refer to \cite{Lee1, Lee2}).

\begin{lemma}[\mbox{\cite{Glo}}]\label{function-eta_0}
There is a function $\eta_0:(0,1) \longrightarrow (0, \infty)$ such that $H_{L_1(\mu)}(\eps) \geq \eta_0(\eps)$ for every $\eps\in (0, 1) $ and for every measure $\mu$.
\end{lemma}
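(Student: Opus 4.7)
The plan is to reduce the modulus of complex convexity of an arbitrary $L_1(\mu)$ to a scalar pointwise inequality and then to integrate with a careful case split. Given $f\in S_{L_1(\mu)}$ and $g\in L_1(\mu)$ with $\|g\|=\eps\in(0,1)$, my first move is to replace the supremum by the circular average,
\[
\sup_{\theta\in[0,2\pi]}\|f+\e^{i\theta}g\|\ \geq\ \frac{1}{2\pi}\int_0^{2\pi}\|f+\e^{i\theta}g\|\,d\theta\ =\ \int_\Omega M\bigl(|f(\omega)|,|g(\omega)|\bigr)\,d\mu(\omega),
\]
where Fubini, together with a pointwise rotation of the integration variable that absorbs $\arg f(\omega)$ and $\arg g(\omega)$, reduces the inner integral to the purely scalar quantity $M(s,t):=\frac{1}{2\pi}\int_0^{2\pi}\sqrt{s^2+2st\cos\theta+t^2}\,d\theta$ for $s,t\geq 0$. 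The problem is thereby reduced to a lower bound on $\int_\Omega M(|f|,|g|)\,d\mu-1$.

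Next I would prove the scalar inequality quantifying the strict subharmonicity of $z\mapsto|z|$: there is an absolute constant $c>0$ such that
\[
M(s,t)\ \geq\ \max(s,t)+c\,\frac{\min(s,t)^2}{\max(s,t)}\qquad\bigl(s,t\geq 0,\ (s,t)\neq(0,0)\bigr).
\]
A Taylor expansion of the integrand when $u:=\min(s,t)/\max(s,t)$ is small yields $M/\max=1+u^2/4+O(u^3)$, and for $u\in[\delta,1]$ the formula (rewritten as $\sqrt{1+2u\cos\theta+u^2}=|1+u\e^{i\theta}|$) is continuous and strictly above $1$, so compactness supplies the constant on the remaining interval. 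A short case analysis then recasts the inequality as
\[
M(|f|,|g|)-|f|\ \geq\ c'\,\min\!\left(|g|,\ \frac{|g|^2}{|f|}\right),
\]
where the second alternative is read as $|g|$ on $\{f=0\}$.

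Finally I integrate this estimate and split the space as $\Omega=A\cup B$ with $A=\{|f|\geq|g|\}$ and $B=\Omega\setminus A$. On $B$ the pointwise excess is $\geq c'|g|$; on $A$ it is $\geq c'|g|^2/|f|$. If $\int_B|g|\,d\mu\geq\eps/2$, the first term alone contributes at least $c'\eps/2$. Otherwise $\int_A|g|\,d\mu\geq\eps/2$ and Cauchy--Schwarz gives
\[
\Bigl(\int_A|g|\,d\mu\Bigr)^2\ \leq\ \int_A\frac{|g|^2}{|f|}\,d\mu\cdot\int_A|f|\,d\mu\ \leq\ \int_A\frac{|g|^2}{|f|}\,d\mu
\]
(the second inequality using $\int_A|f|\,d\mu\leq\|f\|=1$), whence $\int_A|g|^2/|f|\,d\mu\geq\eps^2/4$. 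In either case the circular average exceeds $1$ by an amount bounded below by a multiple of $\eps^2$, and one can set $\eta_0(\eps):=c''\eps^2$ for an explicit absolute constant $c''>0$.

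The scalar inequality in the middle paragraph is routine, so the genuine content, and the main obstacle, lies in the last paragraph: a naive pointwise bound is not uniform in $f$ because the excess $M(|f|,|g|)-|f|$ degenerates wherever $|f|\gg|g|$ on a set carrying most of the mass of $g$. It is precisely the case split, together with the Cauchy--Schwarz trick on $A$, that recovers the $\eps^2$ loss in that regime and delivers a lower bound that is independent of $\mu$ and of $f$, which is the entire point of the lemma.
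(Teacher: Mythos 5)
Your argument is correct, but note that the paper does not prove this lemma at all: it is quoted directly from Globevnik's paper \cite{Glo}, so what you have produced is a self-contained proof of a cited result rather than a variant of an argument in the text. Your route is sound: the reduction of the circular average to $\int_\Omega M(|f|,|g|)\,d\mu$ with $M(s,t)=\frac{1}{2\pi}\int_0^{2\pi}|s+t\e^{i\theta}|\,d\theta$ is exactly right, the scalar bound $M(s,t)\ge \max(s,t)\bigl(1+c\,(\min/\max)^2\bigr)$ follows from the Taylor expansion near $u=0$ plus compactness as you say (and the passage to $M(|f|,|g|)-|f|\ge c'\min(|g|,|g|^2/|f|)$ needs only the elementary observation that $1-v+cv^2$ is bounded below on $[0,1]$), and the $A/B$ split with Cauchy--Schwarz on $A$ correctly recovers a bound of order $\eps^2$; also, your normalization $\|g\|=\eps$ is harmless since the argument only uses $\int_\Omega|g|\,d\mu\ge\eps$, which covers the case $\|y\|\ge\eps$ in the definition of $H_X$. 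The standard proof (Globevnik's, and the one in Davis--Garling--Tomczak-Jaegermann) is structured differently: one proves a two-variable homogeneous pointwise inequality of the form $\frac{1}{2\pi}\int_0^{2\pi}|a+b\e^{i\theta}|\,d\theta\ge\bigl(|a|^2+c|b|^2\bigr)^{1/2}$ and then integrates it using Minkowski's integral inequality (i.e.\ the triangle inequality for the norm $(a,b)\mapsto(a^2+cb^2)^{1/2}$ on $\R^2$), which yields $H_{L_1(\mu)}(\eps)\ge\sqrt{1+c\,\eps^2}-1$ in one stroke; the joint convexity and homogeneity of that minorant is precisely what absorbs the degeneracy on $\{|f|\gg|g|\}$ that forces your case split. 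Your version trades that structural trick for a more hands-on decomposition, at the cost of a slightly longer argument and a less explicit constant, but both deliver a modulus of order $\eps^2$ independent of $\mu$, which is all the lemma requires.
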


Second, let $L$ be a locally compact Hausdorff space and $C_0(L)$ be the Banach space consisting of all complex-valued continuous functions which vanish at infinity. Recall that a bounded linear functional $x^*$ on $C_0(L)$ is represented by a complex-valued Radon measure $\mu_{x^*}$ on $L$. That is, for each $x\in C_0(L)$,
\[
x^*(x) = \int_L x \, d\mu_{x^*}
\]
(this is the classical Riesz representation theorem for $C_0(L)^*$). Notice that every complex-valued Radon measure is regular and also that there is a Borel measure $\nu$ on $L$ such that  $C_0(L)^*$ is isometrically isometric to $L_1(\nu)$ (just take a maximal family $\{\mu_\alpha\}_\alpha$ of mutually singular positive Radon measures, then $C_0(K)^* =\left[ \oplus_{\alpha} L_1(\mu_\alpha)\right]_{\ell_1}$) and so $C_0(L)^{**}$ can be identified with $L_\infty(\nu)$. We refer to the classical book \cite{Rud} for background. For each Borel subset $A$, we define the operator $P_A: C_0(L)^{**}\longrightarrow C_0(L)^{**}$ by $P_A(f) = f\chi_A$ for every $f\in C_0(L)^{**}\equiv L_\infty(\nu)$. Given a Radon measure $\mu$ on $L$ and a Borel subset $A$ of $L$, $|\mu|(A)$ denotes the total variation of $\mu$ on $A$. We will use the following result which is just a particular case of \cite[Lemma~2.3]{A} using that $C_0(L_2)^*\equiv L_1(\nu)$ is uniformly complex convex and taking $\eta_0$ as the function given by Lemma~\ref{function-eta_0}.

\begin{lemma}[\mbox{Particular case of \cite[Lemma 2.3]{A}}]\label{lemma1}
Let $L_1$, $L_2$ be locally compact Hausdorff spaces and let $A$ be a Borel subset of $L_1$. For given $0<\lambda<1$, if $T\in \mathcal{L}(C_0(L_1), C_0(L_2)^*)$ with $\|T\|=1$ satisfies $\|T^{**}P_A\|>1-{{\eta_0(\lambda)}\over{1+\eta_0(\lambda)}}$, then $\|T^{**}(\Id-P_A)\|\leq \lambda$.
\end{lemma}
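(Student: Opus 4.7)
The plan is to argue by contradiction, exploiting the uniform complex convexity of $C_0(L_2)^*\equiv L_1(\nu_2)$ recorded in Lemma~\ref{function-eta_0}. Throughout I identify $C_0(L_1)^{**}$ with $L_\infty(\nu_1)$ so that $P_A$ becomes multiplication by $\chi_A$.

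Assume toward a contradiction that $\|T^{**}(\Id-P_A)\|>\lambda$. The hypothesis rewrites as $\|T^{**}P_A\|>1/(1+\eta_0(\lambda))$, so from the definitions of these operator norms I can select $f_1,f_2\in B_{L_\infty(\nu_1)}$ with $f_1=f_1\chi_A$ and $f_2=f_2\chi_{L_1\setminus A}$ satisfying $\|T^{**}f_1\|>1/(1+\eta_0(\lambda))$ and $\|T^{**}f_2\|>\lambda$; such $f_i$ are available because $P_A$ and $\Id-P_A$ are contractions on $L_\infty(\nu_1)$. The crucial feature is that $f_1$ and $f_2$ have disjoint essential supports, which is precisely why I pass to the bidual instead of working directly in $C_0(L_1)$.

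For each $\theta\in[0,2\pi]$ set $g_\theta=f_1+e^{i\theta}f_2$. Disjointness of supports gives $|g_\theta|=|f_1|\chi_A+|f_2|\chi_{L_1\setminus A}$ pointwise, so $\|g_\theta\|_\infty\le 1$ and hence $\|T^{**}g_\theta\|\le\|T^{**}\|=1$ for every $\theta$. On the other hand, writing $u=T^{**}f_1$ and $v=T^{**}f_2$ in $L_1(\nu_2)$, I have $\|v\|/\|u\|\ge\|v\|>\lambda$; since the modulus of complex convexity is nondecreasing, Lemma~\ref{function-eta_0} yields $H_{L_1(\nu_2)}(\|v\|/\|u\|)\ge\eta_0(\lambda)$. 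Scaling out $\|u\|$ produces
$$\sup_\theta\|T^{**}g_\theta\|=\|u\|\sup_\theta\left\|\tfrac{u}{\|u\|}+e^{i\theta}\tfrac{v}{\|u\|}\right\|\ge\|u\|\bigl(1+\eta_0(\lambda)\bigr)>\tfrac{1}{1+\eta_0(\lambda)}\bigl(1+\eta_0(\lambda)\bigr)=1,$$
contradicting the previous bound.

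The only delicate step I anticipate is the selection of $f_1,f_2$: one needs bona fide elements of $B_{L_\infty(\nu_1)}$ whose essential supports are strictly disjoint, so that $g_\theta$ stays in the unit ball regardless of the rotation $\theta$. Working in the $L_\infty$-realization of the bidual makes this automatic, while an attempt to stay inside $C_0(L_1)$ itself would require regularity and Urysohn-type arguments to cleanly split a function across an arbitrary Borel set. Everything else reduces to the one-line application of the Glicksberg-Globevnik type bound $\eta_0$.
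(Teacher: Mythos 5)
Your argument is correct: the paper offers no proof of this lemma (it simply cites \cite[Lemma 2.3]{A}), and your contradiction via disjointly supported $f_1=f_1\chi_A$, $f_2=f_2\chi_{L_1\setminus A}$ in $B_{L_\infty(\nu_1)}$ together with the uniform lower bound $\eta_0$ on $H_{L_1(\nu_2)}$ is exactly the standard complex-convexity argument behind the cited result. The only point worth making explicit is why $u=T^{**}f_1$ and $v=T^{**}f_2$ may be treated as elements of $L_1(\nu_2)$: either invoke Lemma~\ref{lemma-bidual} (weak compactness gives $T^{**}(C_0(L_1)^{**})\subset C_0(L_2)^*$), or note that $C_0(L_2)^{***}$ is itself an $L_1(\mu)$-space so the uniform bound of Lemma~\ref{function-eta_0} still applies.
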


The next result is clearly well-known (see \cite[5.5.4 and G.5]{Albiac-Kalton} for instance) and follows from the fact that $C_0(L)^*\equiv L_1(\nu)$ does not contains $c_0$. We state it for the sake of completeness.

\begin{lemma}\label{lemma-bidual}
Let $L_1$ and $L_2$ be locally compact Hausdorff spaces and $T\in \mathcal{L}(C_0(L_1),C_0(L_2)^*)$. Then $T$ is weakly compact and so $T^{**}(C_0(L_1)^{**})\subset C_0(L_2)^*$.
\end{lemma}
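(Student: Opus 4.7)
The plan is to combine three classical facts. First, by the Riesz representation theorem recalled above, $C_0(L_2)^*$ is isometrically isomorphic to some $L_1(\nu)$. A well-known property of $L_1$-spaces is that they are weakly sequentially complete (this can be deduced from the Dunford--Pettis characterization of relatively weakly compact subsets of $L_1$). Since the partial sums $\sum_{k=1}^n e_k$ in $c_0$ form a weakly Cauchy but not weakly convergent sequence, $c_0$ is not weakly sequentially complete, and hence no subspace of $L_1(\nu)$ can be isomorphic to $c_0$.

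Next, I would invoke Pełczyński's theorem: an operator from a $C(K)$-space into a Banach space $Y$ is weakly compact if and only if it is unconditionally converging, and in particular it is automatically weakly compact whenever $Y$ contains no isomorphic copy of $c_0$. To transfer this from $C(K)$ to $C_0(L_1)$, I would use that $C_0(L_1)$ sits as a (codimension-one, norm-one complemented) subspace of $C(L_1^+)$, where $L_1^+$ denotes the one-point compactification of $L_1$; then any $T \in \mathcal{L}(C_0(L_1), C_0(L_2)^*)$ extends to $\widetilde{T} \in \mathcal{L}(C(L_1^+), C_0(L_2)^*)$ (for instance by setting $\widetilde{T}(\mathbf{1}_\infty) = 0$ on the complement). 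Combining the two observations, $\widetilde{T}$ is weakly compact, hence so is its restriction $T$.

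Finally, the assertion $T^{**}(C_0(L_1)^{**}) \subset C_0(L_2)^*$ is exactly the dual reformulation of weak compactness via Gantmacher's theorem: an operator $T\colon X \to Y$ is weakly compact if and only if $T^{**}$ maps $X^{**}$ into (the canonical image of) $Y$ inside $Y^{**}$. Applying this to our $T$ gives the stated inclusion.

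There is essentially no real obstacle here; every ingredient is standard, and the only minor technical point is the passage from the compact case $C(K)$ to the locally compact case $C_0(L_1)$, which is routinely handled by one-point compactification. This is consistent with the authors' own remark that the lemma is well-known and with their reference to \cite{Albiac-Kalton}.
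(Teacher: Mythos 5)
Your proposal is correct and follows exactly the route the paper intends: the paper gives no written proof but points to the fact that $C_0(L_2)^*\equiv L_1(\nu)$ contains no copy of $c_0$ together with \cite[5.5.4 and G.5]{Albiac-Kalton}, which are precisely Pe\l czy\'nski's theorem on weak compactness of operators from $C(K)$-spaces into spaces without $c_0$ and Gantmacher's theorem. Your additional details (weak sequential completeness of $L_1$, and the passage from $C_0(L_1)$ to $C(L_1^+)$ by one-point compactification followed by restriction) are standard and fill in the same argument correctly.
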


We know present the main ingredient in the proof of Theorem~\ref{main:theorem}.

\begin{lemma} \label{lemlem} Let $L_1$ and $L_2$ be locally compact Hausdorff spaces. Then for every $0<\lambda<1$ there is $\eta>0$ which satisfies the following:

\noindent If $B\in \mathcal{B}(C_0(L_1)\times C_0(L_2))$ with $\|B\|=1$ and $(x,y) \in S_{C_0(L_1)}\times S_{ C_0(L_2)}$ satisfy $|B(x,y)|>1-\eta^4$, then there exist compact sets $K_1\subset L_1$, $K_2\subset L_2$  and a bilinear form $\tilde{B}\in \mathcal{B}(C(K_1)\times C(K_2))$ such that
$$
\left|\tilde{B}(x_{|K_1}, y_{|K_2})\right|>1-\lambda^2 \quad \text{and} \quad \|C-B\|<\lambda,
$$
where $C\in \mathcal{B}(C_0(L_1)\times C_0(L_2))$ is given by $C(u,v)=\tilde{B}(u_{|K_1},v_{|K_2})$ for every $(u,v)\in C_0(L_1) \times C_0(L_2)$. Besides, $\min\limits_{t\in K_1} \left|x(t)\right|\ge 1-\eta^2$ and $\min\limits_{s\in K_2} \left|y(s)\right|\ge 1-\eta$.
\end{lemma}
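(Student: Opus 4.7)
My plan is to reformulate through the operators $T\in\mathcal{L}(C_0(L_1),C_0(L_2)^*)$ and $T'\in\mathcal{L}(C_0(L_2),C_0(L_1)^*)$ associated to $B$ via $[T(u)](v)=[T'(v)](u)=B(u,v)$, exploiting in parallel the fact (Lemma~\ref{lemma-bidual}) that $T^{**}$ and $(T')^{**}$ land in $C_0(L_2)^*$ and $C_0(L_1)^*$ respectively. This lets me extend $B$ to a well-defined bilinear form $B^{**}$ on $L_\infty(\nu_1)\times L_\infty(\nu_2)$ (where $\nu_i$ is the Borel measure with $C_0(L_i)^{**}\equiv L_\infty(\nu_i)$) satisfying $\langle T^{**}(F),G\rangle=\langle F,(T')^{**}(G)\rangle$, so that each quantity can be estimated on either side.

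The natural choice is $K_1:=\{t\in L_1:|x(t)|\geq 1-\eta^2\}$ and $K_2:=\{s\in L_2:|y(s)|\geq 1-\eta\}$, both compact since $x,y$ vanish at infinity, and both automatically delivering the two min-modulus conclusions of the lemma. Putting $\mu:=T(x)$ and $\nu:=T'(y)$ (each of norm at most $1$), splitting $\int y\,d\mu$ across $K_2$ and $\int x\,d\nu$ across $K_1$, and using $\|y\chi_{L_2\setminus K_2}\|_\infty\leq 1-\eta$, $\|x\chi_{L_1\setminus K_1}\|_\infty\leq 1-\eta^2$, the hypothesis $|B(x,y)|>1-\eta^4$ produces the variation bounds $|\mu|(L_2\setminus K_2)<\eta^3$ and $|\nu|(L_1\setminus K_1)<\eta^2$. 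Replacing $x$ on $K_1$ by the ``unimodular'' function $\hat g:=x/|x|$ and $y$ on $K_2$ by $\hat h:=y/|y|$ (which differ from the originals by at most $\eta^2$ and $\eta$ respectively on those sets), and invoking the identities
\[
\langle T^{**}(\hat g\chi_{K_1}),y\rangle=\int_{K_1}\hat g\,d\nu,\qquad \langle (T')^{**}(\hat h\chi_{K_2}),x\rangle=\int_{K_2}\hat h\,d\mu
\]
(which follow from $T^*y=\nu$, $(T')^*x=\mu$, and Lemma~\ref{lemma-bidual}), a short computation yields $\|T^{**}P_{K_1}\|>1-2\eta^2$ and $\|(T')^{**}P_{K_2}\|>1-2\eta$. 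Introducing an auxiliary parameter $\lambda'>0$ and requiring $2\eta<\eta_0(\lambda')/(1+\eta_0(\lambda'))$, Lemma~\ref{lemma1}---applied to $T$ with $A=K_1$ and to $T'$ with $A=K_2$---then delivers the two crucial bounds $\|T^{**}(\Id-P_{K_1})\|\leq\lambda'$ and $\|(T')^{**}(\Id-P_{K_2})\|\leq\lambda'$.

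The approximating form is
\[
\tilde B(f,g):=B^{**}(f\chi_{K_1},g\chi_{K_2})\qquad (f\in C(K_1),\ g\in C(K_2)),
\]
where $f\chi_{K_1}\in L_\infty(\nu_1)$ is the extension of $f$ by zero off $K_1$ (similarly for $g$); then $C(u,v)=\tilde B(u|_{K_1},v|_{K_2})=B^{**}(u\chi_{K_1},v\chi_{K_2})$, and bilinearity gives
\[
B(u,v)-C(u,v)=B^{**}(u\chi_{L_1\setminus K_1},v)+B^{**}(u\chi_{K_1},v\chi_{L_2\setminus K_2}).
\]
Bounding the first summand via $\|T^{**}(\Id-P_{K_1})\|\leq\lambda'$ and the second via $\|(T')^{**}(\Id-P_{K_2})\|\leq\lambda'$ yields $\|C-B\|\leq 2\lambda'$, and the same identity at $(u,v)=(x,y)$ gives $|\tilde B(x|_{K_1},y|_{K_2})|\geq 1-\eta^4-2\lambda'$. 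Setting $\lambda':=\lambda^2/4$ and requiring both $\eta^4<\lambda^2/2$ and $2\eta<\eta_0(\lambda^2/4)/(1+\eta_0(\lambda^2/4))$ produces the desired $\|C-B\|<\lambda$ and $|\tilde B(x|_{K_1},y|_{K_2})|>1-\lambda^2$.

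The main obstacle I foresee is the quadratic gap between $\lambda$ and $\lambda^2$ in the two conclusions: this forces the auxiliary parameter fed into Lemma~\ref{lemma1} to be of order $\lambda^2$, so $\eta$ has to be chosen below the corresponding $\eta_0(\lambda^2/4)$-threshold, and one must juggle the two different powers of $\eta$ that appear on the $x$- and $y$-sides. A more subtle point is that the Borel functions $\hat g\chi_{K_1}$ and $\hat h\chi_{K_2}$ are not continuous, so one has to appeal explicitly to Lemma~\ref{lemma-bidual} (weak compactness) to justify treating them as legitimate inputs to $T^{**}$ and $(T')^{**}$ with outputs genuinely in $C_0(L_2)^*$ and $C_0(L_1)^*$.
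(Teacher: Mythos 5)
Your argument is correct in outline and reaches the same final object as the paper, but it gets there by a genuinely more symmetric route. The paper proceeds sequentially: it applies Lemma~\ref{lemma1} to $T$ to get $\|T^{**}-T^{**}P_{K_1}\|\le\lambda/2$, then builds the auxiliary operator $U(v)(u)=T^{**}(P_{K_1}u)(v)$ from the \emph{already truncated} operator and applies Lemma~\ref{lemma1} a second time to $U$; the well-definedness of $\tilde B$ is then checked by a Goldstine argument relating $U^*$ to $T^{**}P_{K_1}$. You instead apply Lemma~\ref{lemma1} in parallel to $T$ and to the plain transpose $T'$, and control $\|B-C\|$ by splitting $B-C$ into the two truncation errors $B^{**}(u\chi_{K_1^c},v)+B^{**}(u\chi_{K_1},v\chi_{K_2^c})$. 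This is cleaner and gives the sharper bound $\|B-C\|\le 2\lambda'$, and your observation that the level sets $\{|x|\ge 1-\eta^2\}$, $\{|y|\ge 1-\eta\}$ are already compact (since $x,y$ vanish at infinity) lets you skip the paper's regularity step and take $K_1,K_2$ to be the full level sets. The detour through the unimodular functions $\hat g,\hat h$ is unnecessary (pairing with $x\chi_{K_1}$ and $y\chi_{K_2}$ directly gives slightly better constants), but it is harmless.

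The one point you must actually prove rather than assert is the existence of the symmetric extension $B^{**}$, i.e.\ the identity $\langle T^{**}(F),G\rangle=\langle F,(T')^{**}(G)\rangle$ for $F\in C_0(L_1)^{**}$, $G\in C_0(L_2)^{**}$: this is precisely Arens regularity of $B$, and it does follow from Lemma~\ref{lemma-bidual}, but via a short limiting argument you have omitted. Concretely, one shows $T^{**}=(T')^*$ on $C_0(L_1)^{**}$: take a bounded net $u_\alpha\to F$ weak$^*$ (Goldstine); then $Tu_\alpha\to (T')^*F$ in $\sigma(C_0(L_2)^*,C_0(L_2))$ by weak$^*$--weak$^*$ continuity of $(T')^*$, while $Tu_\alpha\to T^{**}F$ in $\sigma(C_0(L_2)^{***},C_0(L_2)^{**})$; since $T^{**}F$ lies in $C_0(L_2)^*$ by weak compactness, both limits live in $C_0(L_2)^*$ and must coincide. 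Without this (or an equivalent substitute, such as the paper's Goldstine computation), the second summand in your decomposition of $B-C$ cannot legitimately be rewritten as a pairing against $(T')^{**}(v\chi_{L_2\setminus K_2})$. With that identity supplied, and after adjusting the harmless numerical constants (e.g.\ your bound for $\|(T')^{**}P_{K_2}\|$ is $1-\eta^4-\eta^3-\eta$ rather than $1-2\eta$, so the smallness condition on $\eta$ should read $3\eta<\eta_0(\lambda')/(1+\eta_0(\lambda'))$ or similar), the proof is complete.
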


\begin{proof}
Choose $0<\eta<1$ such that $\eta^4+\eta^3+\eta^2+\eta<\min\left\{\lambda^2,{{\eta_0\left({{\lambda}\over{2}}\right)}\over{1+ \eta_0\left({{\lambda}\over{2}}\right)}} \right\}$, where $\eta_0(\cdot)$ is given by Lemma~\ref{function-eta_0}.

Fix $B\in \mathcal{B}(C_0(L_1)\times C_0(L_2))$ with $\|B\|=1$ and  $(x,y) \in S_{C_0(L_1)}\times S_{ C_0(L_2)}$ satisfying $|B(x,y)|>1-\eta^4$. Consider $T\in \mathcal{L}(C_0(L_1),C_0(L_2)^*)$ the operator defined by $[T(u)](v)=B(u,v)$ for every $v\in C_0(L_2)$ and $u\in C_0(L_1)$, and write
\[
A_1=\big\{t\in L_1 ~:~  |x(t)| \geq 1-\eta^2\},\quad A_2=\big\{s\in L_2 ~:~ |y(s)| \geq 1-\eta \}.
\]
Then, we have that
\begin{align*}
1-\eta^4
<&\left|[T(x)](y)\right|=\left|[T^*(y)](x)\right|=\left|\int_L x \, d \mu_{T^*(y)}\right|\\
\leq& \int_{A_1} d\left|\mu_{T^*(y)}\right|+(1-\eta^2)\int_{A_1^c} d\left|\mu_{T^*(y)}\right|\leq 1-\eta^2 \int_{A_1^c} d\left|\mu_{T^*(y)}\right|.\\
\end{align*}
Hence, $\left|\mu_{T^*(y)}\right|(A_1^c)<\eta^2$. By the regularity of $\mu_{T^*(y)}$, there exists a compact set $K_1\subset A_1$ such that $\left|\mu_{T^*(y)}\right|(K_1^c)<\eta^2$.
Therefore,
\begin{align*}
\left|\int_{K_1} x\, d\mu_{T^*(y)}\right|=&\left|T^{**}(x_{|K_1})(y)\right|\\
\geq&\left|[T^{**}(x)](y)\right|-\left|[T^{**}(x)](y)- [T^{**}(x_{|K_1})](y)\right|\\
\geq& 1-\eta^4-\left|\int_{K_1^c}x\, d\mu_{T^*(y)}\right|\\
>&1-\eta^4-\eta^2
\end{align*}
Thus, $\|T^{**}P_{K_1}\|>1-\eta^4-\eta^2>1-{{\eta_0\left({{\lambda}\over{2}}\right)}\over{1+ \eta_0\left({{\lambda}\over{2}}\right)}}$.
By Lemma~\ref{lemma1}, we get  $\|T^{**}-T^{**}P_{K_1}\|\leq \lambda/2$. Now, let $U\in \mathcal{L}(C_0(L_2),C_0(L_1)^*)$ be the operator defined by $[U(v)](u)=T^{**}(P_{K_1}u)(v)$ for every $u\in C_0(L_1)$ and $v\in C_0(L_2)$ (note that $T^{**}P_{K_1}(C_0(L_1))\subset C_0(L_2)^{*}$ by Lemma~\ref{lemma-bidual}). Since
$$
|[U(y)](x)|=\left|\int_{K_1} x \, d\mu_{T^*(y)}\right|>1-\eta^4 -\eta^2,
$$
similarly to the above, we get
$\left|\mu_{U^*(x)}\right|(A_2^c)<\eta^3+\eta$. So there exists a compact set  $K_2\subset A_2$ such that
 $\left|\mu_{U^*(x)}\right|(K_2^c)<\eta^3+\eta.$
Hence, $$\left|\int_{K_2^c} y\, d\mu_{U^*(x)}\right|>1-\eta^4-\eta^2-\eta^3-\eta$$ and $$\|U^{**}P_{K_2}\|>1-{{\eta_0\left({{\lambda}\over{2}}\right)}\over{1+ \eta_0\left({{\lambda}\over{2}}\right)}}.$$
Using Lemma~\ref{lemma1} again, we get  $\|U^{**}-U^{**}P_{K_2}\|\leq \lambda/2$.

Define $\tilde{B}\in \mathcal{B}(C(K_1)\times C(K_2))$ by $\tilde{B}(u',v')=[U^{**}(P_{K_2}\tilde{v}')](\tilde{u}')$ for every $(u',v')\in C(K_1)\times C(K_2)$, where $\tilde u'$ and $\tilde v'$ are norm preserving extensions of $u'$ and $v'$ to $L_1$ and $L_2$, respectively. Then, $C$ as in the statement of the lemma is well-defined. Indeed, if $(u,v)\in C_0(L_1)\times C_0(L_2)$, then $\inner{v, U^*u} = \inner{T^{**}(P_{K_1}u), v}.$
Since $U^*u\in C_0(L_2)^*$ and $T^{**}(P_{K_1}u)\in C_0(L_2)^*$, we have, by Goldstine's lemma,
$\inner{\tilde{v}, U^*u} = \inner{T^{**}(P_{K_1}u), \tilde{v}}$
for all $\tilde{v}\in C_0(L_2)^{**}$. Therefore,
\[
 U^{**}(P_{K_2}\tilde{v}')(\tilde{u}')  = \inner{ P_{K_2}\tilde{v'}, U^*(\tilde{u'}) }  = \inner{ P_{K_2}\tilde{v'}, T^{**}(P_{K_1}\tilde{u'}) }.
\]
This implies that $C$ is well-defined.
Finally, we also have that
\begin{align*}
 \|B-C\|
 =&\sup_{x\in S_{C_0(L_1)},~y\in S_{C_0(L_2)}}|B(x,y)-C(x,y)|\\
 =&\sup_{x\in S_{C_0(L_1)},~y\in S_{C_0(L_2)}}|T^{**}(x)(y)-U^{**}P_{K_2}(y)(x)|\\
 =&\sup_{x\in S_{C_0(L_1)},~y\in S_{C_0(L_2)}}\left(|T^{**}(x)(y)-T^{**}P_{K_1}(x)(y)|+ |T^{**}P_{K_1}(x)(y)-U^{**}P_{K_2}(y)(x)|\right)\\
 \leq&\sup_{x\in S_{C_0(L_1)},~y\in S_{C_0(L_2)}}\left(|T^{**}(x)(y)-T^{**}P_{K_1}(x)(y)|+ |U^{**}(y)(x)-U^{**}P_{K_2}(y)(x)|\right)\\
=& \|T^{**}-T^{**}P_{K_1}\|+\|U^{**}-U^{**}P_{K_2}\|\leq \lambda.\qedhere
\end{align*}
\end{proof}

The next ingredient is the following easy consequence of Urysohn lemma.

\begin{lemma}\label{function}
Assume that two continuous functions $f,g\in B_{C_0(L)}$ and a compact set $A\subset L$ are given. If $\|f_{|_A}-g_{|_A}\|<\eps$, then there exists a function $h\in B_{C_0(L)}$ such that $h_{|_A}=f_{|_A}$ and $\|h-g\|<\eps$.
\end{lemma}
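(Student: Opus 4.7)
The plan is a direct application of Urysohn's lemma in the locally compact Hausdorff setting, combined with a convex-combination construction that interpolates between $f$ on $A$ and $g$ elsewhere.

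First, I would set $V=\{t\in L\,:\,|f(t)-g(t)|<\eps\}$. Since $f-g$ is continuous, $V$ is open, and by the hypothesis $\|f_{|A}-g_{|A}\|<\eps$ we have $A\subset V$. Because $L$ is locally compact Hausdorff and $A$ is compact, the Urysohn lemma in its locally compact form yields a function $\varphi\in C_c(L)$ with $0\le\varphi\le 1$, $\varphi_{|A}\equiv 1$ and $\supp\varphi\subset V$.

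Then I would define
\[
h=\varphi f+(1-\varphi)g.
\]
Since $f,g\in C_0(L)$ and $\varphi$ is bounded continuous, both $\varphi f$ and $\varphi g$ lie in $C_0(L)$, so $h=\varphi f+g-\varphi g\in C_0(L)$. For each $t\in L$, $|h(t)|\le\varphi(t)|f(t)|+(1-\varphi(t))|g(t)|\le 1$, so $h\in B_{C_0(L)}$. On $A$ we have $\varphi\equiv 1$, hence $h_{|A}=f_{|A}$.

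Finally, for the norm estimate, note that $|h(t)-g(t)|=\varphi(t)|f(t)-g(t)|$. If $t\in\supp\varphi$, then $t\in V$, so $|f(t)-g(t)|<\eps$; if $t\notin\supp\varphi$, then $\varphi(t)=0$ and the difference vanishes. Thus $\|h-g\|\le\sup_{t\in\supp\varphi}|f(t)-g(t)|<\eps$ (the supremum is attained on the compact set $\supp\varphi$, giving the strict inequality). There is essentially no obstacle here: the only point requiring a moment of care is invoking the right (locally compact) version of Urysohn's lemma to obtain $\varphi$ with compact support contained in the open set $V$, which is standard.
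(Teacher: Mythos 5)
Your proposal is correct and follows essentially the same route as the paper: both choose an open neighbourhood of $A$ on which $|f-g|<\eps$, apply Urysohn's lemma to get a bump function equal to $1$ on $A$ and supported in that neighbourhood, and take the convex combination $h=\varphi f+(1-\varphi)g$. Your version is in fact slightly more careful than the paper's (which ends with ``clearly $h$ works'') in justifying the strict inequality via compactness of the support.
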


\begin{proof}
Since $A$ is compact there exists an open set $U$ so that $A\subset U$ and $\|f_{|_U}-g_{|_U}\|<\eps$.
Using Urysohn lemma, choose $u\in C_0(L)$ such that $0\leq u \leq 1$, $u=1$ on $A$ and $u=0$ on $U^c$.
Then, clearly $h=uf+(1-u)g$ works.
\end{proof}

It is proved in \cite{ACKP} that the set of all norm attaining bilinear forms are dense in $\mathcal{B}(C_0(L)\times C_0(L))$ for every locally compact space $L$. With a slight modification of the proof we may get the following.

\begin{proposition}[\mbox{\cite{ACKP}}]\label{prop:density} Let $L_1$, $L_2$ be locally compact Hausdorff topological spaces. Then, every continuous bilinear form on $C_0(L_1)\times C_0(L_2)$ can be approximated by norm attaining bilinear forms.
\end{proposition}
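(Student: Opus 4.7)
The plan is to reduce the density claim on $C_0(L_1) \times C_0(L_2)$ to the corresponding statement on $C(K_1) \times C(K_2)$ for compact Hausdorff spaces and then to adapt the argument of \cite{ACKP} to cover two unrelated compact spaces. Given $B \in \mathcal{B}(C_0(L_1) \times C_0(L_2))$ with $\|B\|=1$ and $\delta>0$, I would first fix a small $\lambda>0$, invoke the classical Bishop-Phelps theorem applied to the norming functional of $B$ (or simply the definition of $\|B\|$) to pick $(x,y)\in S_{C_0(L_1)}\times S_{C_0(L_2)}$ with $|B(x,y)|$ close enough to $1$, and apply Lemma~\ref{lemlem} to extract compact sets $K_1\subset L_1$, $K_2\subset L_2$ together with bilinear forms $\tilde{B}\in\mathcal{B}(C(K_1)\times C(K_2))$ and $C\in\mathcal{B}(C_0(L_1)\times C_0(L_2))$, $C(u,v)=\tilde{B}(u_{|K_1},v_{|K_2})$, satisfying $\|C-B\|<\lambda$.

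Next, I would look for a norm-attaining bilinear form $\tilde{S}\in\mathcal{B}(C(K_1)\times C(K_2))$ with $\|\tilde{S}-\tilde{B}\|<\lambda$. This is where the ``slight modification'' of \cite{ACKP} enters: their iterative perturbation scheme for $\mathcal{B}(C_0(L)\times C_0(L))$ already handles the two coordinates separately, peaking on possibly different compact subsets of $L$ for each variable, so replacing the single space $L$ by two unrelated compact Hausdorff spaces $K_1, K_2$ is essentially notational. Concretely, one recursively builds $(\tilde{B}_n, x_n, y_n)$ with $\tilde{B}_0=\tilde{B}$: at each step, the regularity of the representing Radon measures locates compact subsets of $K_1$ and $K_2$ where $|x_n|$ and $|y_n|$ are close to $1$; Urysohn's lemma manufactures peaking functions there; and one adds to $\tilde{B}_n$ a rank-one (or similar) perturbation of controlled norm, carrying the correct complex phase, so that the new form is better attained at an adjusted pair $(x_{n+1},y_{n+1})$. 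Summability of the perturbations forces $(\tilde{B}_n)$ to converge to the sought $\tilde{S}$ and $(x_n,y_n)$ to converge to a norm-attaining pair.

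Finally, I would lift $\tilde{S}$ back to $\mathcal{B}(C_0(L_1)\times C_0(L_2))$ via $S(u,v)=\tilde{S}(u_{|K_1}, v_{|K_2})$. Lemma~\ref{function} together with a Tietze-type norm-preserving extension of the norm-attaining pair of $\tilde{S}$ from $K_1\times K_2$ to functions in $B_{C_0(L_1)}\times B_{C_0(L_2)}$ both yields $\|S\|=\|\tilde{S}\|$ and shows that $S$ attains its norm. Choosing $\lambda<\delta/2$, the triangle inequality $\|S-B\|\le\|S-C\|+\|C-B\|\le\|\tilde{S}-\tilde{B}\|+\lambda<2\lambda<\delta$ closes the argument. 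The main obstacle is the middle step: although no essentially new difficulty is expected, the measure-theoretic estimates and the bookkeeping of perturbation sizes in the ACKP iteration must be redone carefully on two independent variables, whereas the localization in step one and the extension in step three are routine given Lemmas~\ref{lemlem} and~\ref{function}.
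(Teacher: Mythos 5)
Your proposal reaches the right conclusion but is more roundabout than the paper, which in fact offers no proof at all: it simply cites \cite{ACKP}, where density of norm-attaining bilinear forms is proved for $\mathcal{B}(C_0(L)\times C_0(L))$ with $L$ locally compact, and observes that the argument never uses that the two factors coincide, so allowing two different locally compact spaces is a purely notational change. Your opening move --- localizing to compact sets $K_1,K_2$ via Lemma~\ref{lemlem} and later lifting back with norm-preserving extensions and Lemma~\ref{function} --- is logically admissible (Lemma~\ref{lemlem} is proved independently of Proposition~\ref{prop:density}, so there is no circularity, and the lift does preserve both the norm and norm attainment), but it buys you nothing: the theorem of \cite{ACKP} is already stated for locally compact spaces, so after your reduction you face exactly the same task you started with, namely running their iteration on two \emph{distinct} spaces. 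That middle step is where all the content lives, and there your proposal, like the paper, is only a sketch; your summary of the iteration (regularity of the representing Radon measures, Urysohn peak functions, summable perturbations) is accurate, and the claim that it is insensitive to having two different spaces is precisely the ``slight modification'' the authors invoke without writing out. One caution in favour of actually rechecking that iteration rather than waving at it: the tempting genuinely-short route of extending $B$ to a bilinear form on $C_0(L_1\sqcup L_2)\times C_0(L_1\sqcup L_2)$ and applying \cite{ACKP} verbatim fails, because the restriction of the resulting norm-attaining form back to $C_0(L_1)\times C_0(L_2)$ need not attain its norm, so the two-space adaptation cannot be bypassed by a formal trick.
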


We are now ready to provide the proof of the main result.

\begin{proof}[Proof of Theorem~\ref{main:theorem}]
Given $0<\eps<1$,  let
$$
\psi= \min \left\{{{\eps^2}\over{4}}, {{\eta_0\left({{\eps}\over{4}}\right)}\over{1+ \eta_0\left({{\eps}\over{4}}\right)}} \right\}.
$$
Choose suitable positive numbers $0<\lambda<\zeta<\gamma<\psi$ satisfying the following three conditions:
 \begin{enumerate}
 \item ${{\left((1-\lambda^2)^2 -2\lambda\right)\left(1+\lambda-\lambda^3\right)}\over{1+\lambda}}-{{\lambda^2}\over{2}}>1-\zeta^2$.
 \item $\zeta < \min \left\{\gamma, {{\eta_0 \left(\gamma\right)}\over{1+ \eta_0\left(\gamma\right)}} \right\}$.
 \item ${{\left((1-\lambda^2)^2 -2\lambda\right)\left(1+\lambda-\lambda^3\right)}\over{1+\lambda}}-{{\lambda^2}\over{2}}-\zeta - 2\gamma>1-\psi^2$.
 \end{enumerate}
For such $\lambda$, choose $0<\eta<\eps$ given by Lemma~\ref{lemlem}.

Fix $B\in \mathcal{B}\left(C_0(L_1)\times C_0(L_2)\right)$ with $\|B\|=1$ and $(x_0,y_0)\in S_{C_0(L_1)}\times S_{C_0(L_2)}$ satisfying
$$
\left|B(x_0,y_0)\right|>1-\eta^4.
$$
Then there are compact sets $K_1$, $K_2$ and a bilinear form $\tilde{B}\in \mathcal{B}(C(K_1)\times C(K_2))$ which satisfy the conditions in Lemma~\ref{lemlem}. Let $C\in \mathcal{B}(C_0(L_1)\times C_0(L_2))$ be the canonical extension of $\tilde{B}$ and let $f_0={x_0}_{|K_1}$ and $g_0={y_0}_{|K_2}$. Choose $\alpha\in \mathbb{C}$ with $|\alpha|=1$ such that $|\tilde{B}(f_0, g_0)|=\alpha \tilde{B}(f_0, g_0)$.

Define $\tilde{B}_1 \in \mathcal{B}(C(K_1)\times C(K_2))$ by
$$
\tilde{B}_1(f, g)=\alpha \tilde{B}(f,g)+\lambda \alpha^2 \tilde{B}(f,g_0)\tilde{B}(f_0,g) \qquad \bigl((f,g)\in C(K_1)\times C(K_2) \bigr)
$$
and write $\tilde{B}_2=\tilde{B}_1/\|\tilde{B}_1\|$. Since the set of norm attaining bilinear mappings is dense in $\mathcal{B}(C(K_1)\times C(K_2))$ (Proposition~\ref{prop:density}), there exist $\tilde{B}_3\in \mathcal{B}(C(K_1)\times C(K_2))$ and $(f_1,g_1)\in S_{C(K_1)}\times S_{C(K_2)}$ such that
$$
\|\tilde{B}_3\|=1,~\|\tilde{B}_3- \tilde{B}_2\|<{{\lambda^2}\over{2}} \quad \text{and} \quad |\tilde{B}_3(f_1,g_1)|=1.
$$
Moreover, rotating $g_1$ and $f_1$ if needed, we may assume that
$$
\tilde{B}_3(f_1, g_1)=1 \quad \text{and} \quad \alpha \tilde{B}(f_1, g_0)=|\tilde{B}(f_1, g_0)|.
$$
First, we have that
\begin{align*}
\|\tilde{B}_1\|
&\geq |\tilde{B}_1(f_0, g_0)|=\left|\alpha \tilde{B}(f_0,g_0)+\lambda \left(\alpha \tilde{B}(f_0,g_0)\right)^2\right|\\
&=\left|\tilde{B}(f_0,g_0)\right|+\lambda \left|\tilde{B}(f_0,g_0)\right|^2\geq 1-\lambda^2+\lambda (1-\lambda^2)^2.
\end{align*}
Thus, $1-\lambda\leq \|\tilde{B}_1\|\leq 1+\lambda$.
Second,
\begin{align*}
1=\|\tilde{B}_3\|
&= \tilde{B}_3(f_1, g_1)\leq \re \tilde{B}_2(f_1,g_1)+{{\lambda^2}\over{2}}={{\re \tilde{B}_1(f_1,g_1)}\over{\|\tilde{B}_1\|}}+{{\lambda^2}\over{2}}\\
&={{1}\over{\|\tilde{B}_1\|}}\left(\re \tilde{B}(f_1, g_1)+\re \lambda  \alpha^2 \tilde{B}(f_1, g_0)\cdot \tilde{B}(f_0, g_1)\right)+{{\lambda^2}\over{2}}\\
&\leq {{1}\over{\|\tilde{B}_1\|}}\left(1+\re \lambda \alpha \tilde{B}(f_0, g_1)\right)+{{\lambda^2}\over{2}}.
\end{align*}
Hence, we have $ 1+\re \lambda \alpha \tilde{B}(f_0, g_1)\geq \|\tilde{B}_1\|-{{\lambda^2}\over{2}}\|\tilde{B}_1\|\geq \|\tilde{B}_1\|-\lambda^2$, and this implies
\[
\re \alpha \tilde{B}(f_0, g_1) \geq (1-\lambda^2)^2 -2\lambda.
\]
In the same way, we get
\[
\alpha\tilde{B}(f_1, g_0)=|\tilde{B}(f_1, g_0)|\geq (1-\lambda^2)^2 -2\lambda.
\]
Define $T_1\in \mathcal{L}(C(K_1),C(K_2)^*)$ by $[T_1(f)](g)=\tilde{B}_3(f,g)$ for $g\in C(K_2)$ and $f\in C(K_1)$. We have
\begin{align*}
\re T_1(f_0)(g_1)
&=\re \tilde{B}_3(f_0,g_1)\geq \re \tilde{B}_2(f_0,g_1)-{{\lambda^2}\over{2}}\\
&= {{\re \tilde{B}_1(f_0,g_1)}\over{\|\tilde{B_1}\|}}-{{\lambda^2}\over{2}}\geq  {{\re \tilde{B}_1(f_0,g_1)}\over{1+\lambda}}-{{\lambda^2}\over{2}}\\
&= {{\re \alpha \tilde{B}(f_0,g_1)+\re \lambda \alpha^2 \tilde{B}(f_0,g_0)\tilde{B}(f_0,g_1)}\over{1+\lambda}}-{{\lambda^2}\over{2}}\\
&={{\re \alpha \tilde{B}(f_0,g_1)\left(1+\lambda \alpha \tilde{B}(f_0,g_0)\right)}\over{1+\lambda}}-{{\lambda^2}\over{2}}\\
&>{{\left((1-\lambda^2)^2 -2\lambda\right)\left(1+\lambda-\lambda^3\right)}\over{1+\lambda}}-{{\lambda^2}\over{2}}\\
&>1-\zeta^2.
\end{align*}
By the polar decomposition of the measure $\mu_{T^*_1(g_1)}$, there is a measurable function $h_{g_1}\in C(K_1)^{**}$ with $|h_{g_1}|=1$ such that
\[
[T^*_1(g_1)](f)=\int_{K_1}f h_{g_1} d|\mu_{T^*_1(g_1)}|\qquad \bigl(f\in C(K_1)\bigr).
\]
Consider the set $A_1=\left\{t \in K_1\, :\, \re \left({{f_0(t)+f_1(t)}\over{2}}\right)h_{g_1}(t)\geq 1-\zeta \right\}$ and observe that
\begin{align*}
1-\zeta^2
&< \re \left[T_1 \left({{f_0+f_1}\over{2}}\right)\right](g_1) = \re [T_1^*(g_1)]\left({{f_0+f_1}\over{2}}\right) =\int_{K_1}\re \left({{f_0+f_1}\over{2}}\right)h_{g_1} d|\mu_{T^*_1(g_1)}|\\
&=\int_{A_1}\re \left({{f_0+f_1}\over{2}}\right)h_{g_1} d|\mu_{T^*_1(g_1)}|+\int_{K_1\setminus A_1}\re \left({{f_0+f_1}\over{2}}\right)h_{g_1} d|\mu_{T^*_1(g_1)}|\\
&<\int_{A_1} d|\mu_{T^*_1(g_1)}|+(1-\zeta)\int_{K_1\setminus A_1} d|\mu_{T^*_1(g_1)}|=1-\zeta \int_{K_1\setminus A_1} d|\mu_{T^*_1(g_1)}|
\end{align*}
Hence, $|\mu_{T^*_1(g_1)}|(K_1\setminus A_1)<\zeta$. By regularity, there is a compact subset $F_1$ of $A_1$ such that $|\mu_{T^*_1(g_1)}|(K_1\setminus F_1)<\zeta$, which implies
\begin{align*}
\|T_1^{**}P_{F_1}\|&\ge  \sup_{f\in B_{C(K_1)}} [T^{**}_1(P_{F_1}f)](g_1)  = \sup_{f\in B_{C(K_1)}} \int_{F_1} f\, d\mu_{T^*_1(g_1)}= |\mu_{T^*_1(g_1)}|(F_1)\\
& > |\mu_{T^*_1(g_1)}|(K_1) - \zeta = \|T^*(g_1)\|-\zeta=1-\zeta\geq 1-{{\eta_0\left(\gamma\right)}\over{1+ \eta_0\left(\gamma\right)}}.
\end{align*}
Hence, from Lemma~\ref{lemlem}, we have $\|T_1^{**}-T_1^{**}P_{F_1}\|\leq \gamma$. Using the definition of the set $A_1$, it follows that
\begin{equation}
\label{func}
\|f_{0|_{A_1}}-f_{1|_{A_1}}\|<2 \sqrt{\zeta(1-\zeta)}.
\end{equation}
Indeed, for $t\in A_1$, we know that $|\re f_0(t) + \re f_1(t)| \geq 2 - 2 \zeta$. As $|f_0(t)|\leq 1$ and $|f_1(t)|\leq 1$, it follows by the parallelogram law, that
$$
|f_0(t)-f_1(t)|^2 \leq 4 - |f_0(t)+f_1(t)|^2 \leq 4 - |\re f_0(t) + \re f_1(t)| \leq 4 (\zeta - \zeta^2).
$$
Now, using Urysohn lemma we can find $f_2\in S_{C(K_1)}$ satisfying
$$
\|f_1-f_2\|<3\zeta,\quad [T_1(f_2)](g_1)=1, \quad \text{and} \quad |f_2(t)|=1 \text{ for every $t\in F_1$}.
$$
Indeed, consider $U=\left\{t\in K_1\, :\, \left| f_1(t)\right|>1-3\zeta\right\}$ and observe that $F_1\subset A_1\subset U$, so there exists a function $u\in C(K_1)$ such that $0\leq u \leq 1$, $u(t)=1$ for every $t\in F_1$, and $u(t)=0$ for every $t\in K_1\setminus U$. Define the function $f_2$ by $f_2(t)=u(t){{f_1}(t)\over{|f_1(t)|}}+(1-u(t))f_1(t)$ for every $t\in U$ and $f_2(t)=f_1(t)$ for every $t\in K_1\setminus U$. Similarly, define $f_3$ by $f_3(t)=-u(t){{f_1}(t)\over{|f_1(t)|}}+(1+u(t))f_1(t)$ for every $t\in U$ and $f_2(t)=f_1(t)$ for every $t\in K_1\setminus U$. It is obvious that $f_2,f_3\in B_{C(K_1)}$ and $\|f_2-f_1\|<3\zeta$. Moreover, $[T_1(f_2)](g_1)=1$ since $[T_1(f_2+f_3)](g_1)=2[T_1(f_1)](g_1)=2$ and $|[T_1(f_2)](g_1)|, |[T_1(f_3)](g_1)|\leq 1$.

Choose any $t_1\in F_1$ and define $S_1\in \mathcal{L} (C(K_1), C(K_2)^*)$ by
$$
S_1(f)=T_1^{**}P_{F_1}(f)+ \overline{f_2(t_1)}f(t_1)\bigl(T_1^{**}-T_1^{**}P_{F_1}\bigr)(f_2) \qquad \bigl( f\in C(K_1) \bigr).
$$
This is well-defined by Lemma~\ref{lemma-bidual}, and satisfies
$$
\|S_1\|=[S_1(f_2)](g_1)=1 \quad \text{and} \quad \|S_1-T_1\|<2\gamma.
$$
Let $T_2\in \mathcal{L}(C(K_2),C(K_1)^*$ be given by $[T_2(g)](f)=[S_1(f)](g)$ for $f\in C(K_1)$ and $g\in  C(K_2)$. We have
\begin{align*}
\re [T_2(g_0)](f_2)
&=\re [S_1(f_2)](g_0)\geq \re [T_1(f_2)](g_0) - 2\gamma = \re [T_1(f_1)](g_0) -\zeta - 2\gamma\\
&=\re \tilde{B}_3(f_1,g_0)\geq \re \tilde{B}_2(f_1,g_0)-{{\lambda^2}\over{2}}-\zeta - 2\gamma\\
&\geq {{\re \tilde{B}_1(f_1,g_0)}\over{\|\tilde{B_1}\|}}-{{\lambda^2}\over{2}}\geq  {{\re \tilde{B}_1(f_1,g_0)}\over{1+\lambda}}-{{\lambda^2}\over{2}}-\zeta - 2\gamma\\
&= {{\re \alpha \tilde{B}(f_1,g_0)+\re \lambda \alpha^2 \tilde{B}(f_1,g_0)\tilde{B}(f_0,g_0)}\over{1+\lambda}}-{{\lambda^2}\over{2}}-\zeta - 2\gamma\\
&={{\re \alpha \tilde{B}(f_1,g_0)\left(1+\lambda \alpha \tilde{B}(f_0,g_0)\right)}\over{1+\lambda}}-{{\lambda^2}\over{2}}-\zeta - 2\gamma\\
&>{{\left((1-\lambda^2)^2 -2\lambda\right)\left(1+\lambda-\lambda^3\right)}\over{1+\lambda}}-{{\lambda^2}\over{2}}-\zeta - 2\gamma >1-\psi^2
\end{align*}
With the same procedure than above, we get a measurable function $h_{f_2}\in C(K_2)^{**}$ with $|h_{f_2}|=1$ such that
$$
[T^*_2(f_2)](g)=\int_{K_2} g h_{f_2} d|\mu_{T^*_2(f_2)}| \qquad \bigl(g\in C(K_2) \bigr),
$$
considering the set $A_2=\left\{t \in K_2 \,:\, \re \left({{g_0+g_1}\over{2}}\right)h_{f_2}\geq 1-\psi \right\}$, we get that $\bigl\|g_{0|_{A_2}}-g_{1|_{A_2}}\bigr\| <2\sqrt{\psi(1-\psi)}$ as in the estimation (\ref{func}) and there is a compact subset $F_2$ of $A_2$ satisfying
\[
\|T_2^{**}-T_2^{**}P_{F_2}\|\leq {{\eps}\over{4}}.
\]
Besides, there exists a function $g_2\in S_{C(K_2)}$ satisfying
$$
\|g_1-g_2\|<3\psi,\quad [T_2(g_2)](f_2)=1, \quad \text{and} \quad |g_2(t)|=1 \text{ for every $t\in F_2$.}
$$
Choose any $t_2\in F_2$, and define an operator $S_2\in \mathcal{L}(C(K_2),C(K_1)^*)$ by
$$
S_2(g)=T_2^{**}P_{F_2}(g)+ \overline{g_2(t_2)}g(t_2)\bigl(T_2^{**}-T_2^{**}P_{F_2}\bigr)(g_2) \qquad \bigl(g\in C(K_2)\bigr).
$$
Then, $\|S_2\|=[S_2(g_2)](f_2)=1$, and $\|S_2-T_2\|<{{\eps}\over{2}}$.

Finally, let $\tilde{B}_4 \in \mathcal{B}(C(K_1)\times C(K_2))$ be given by $\tilde{B}_4(f,g)=[S_2(g)](f)$ for every $(f,g)\in C(K_1)\times C(K_2)$ and let $D\in \mathcal{B}(C_0(L_1)\times C_0(L_2))$ be the canonical extension of $\tilde{B}_4$. Notice that $|D(u,v)|=1$ for any extensions $(u,v)\in S_{C_0(L_1)}\times S_{C_0(L_2)}$ of $(f_{2|_{A_1}},g_{2|_{A_2}})$ because of our construction.
Let $x_1\in C_0(L_1)$ and $y_1\in C_0(L_2)$ be any norm preserving extensions of $f_{2|_{A_1}}$ and $g_{2|_{A_2}}$ respectively. As
$$
\|x_{0|_{A_1}}-x_{1|_{A_1}}\|<2\sqrt{\zeta(1-\zeta)}<\eps \quad \text{and} \quad \|y_{0|_{A_2}}-y_{1|_{A_2}}\|<2\sqrt{\psi(1-\psi)}<\eps,
$$
Lemma~\ref{function} provides with $x_2\in S_{C_0(L_1)}$ and $y_2\in S_{C_0(L_2)}$ such that $x_{2|_{A_1}}=x_{0|_{A_1}}$, $y_{2|_{A_2}}=y_{0|_{A_2}}$ and
$$
\|x_2-x_0\|,\,\|y_2-y_0\|<\eps.
$$
Observe that $|D(x_2,y_2)|=1$. Finally,
\begin{align*}
\left\|B-\overline{\alpha}D\right\|& =\left\|D-\alpha B\right\|
< \|D-\alpha C\|+\|\alpha C-\alpha B\|\leq \|\tilde{B}_4-\alpha \tilde{B} \|+\lambda\\
&\leq \|\tilde{B}_4-\tilde{B}_3\|+\|\tilde{B}_3-\tilde{B}_2 \|+\|\tilde{B}_2-\tilde{B}_1\|+\|\tilde{B}_1-\alpha \tilde{B}\|+\lambda\\
&\leq \|S_2-T_2\|+\|S_1-T_1\|+{{\lambda^2}\over{2}}+\lambda+\lambda+\lambda\\
&<2\gamma+{{\eps}\over{2}}+{{\lambda^2}\over{2}}+3\lambda<\eps.\qedhere
\end{align*}
\end{proof}

\vspace*{1cm}

\end{document}